\newtheorem{theorem}{Theorem}[section]
\newtheorem{definition}[theorem]{Definition}
\newtheorem{remark}[theorem]{Remark}
\newtheorem{lemma}[theorem]{Lemma}
\begin{document}
\baselineskip 17pt
\bibliographystyle{alpha}

\title{A Finite Dimensional $A_{\infty}$ Algebra Example}

\author{Michael P. Allocca}
\address{Department of 
Mathematics, North Carolina State University,
Raleigh NC 27695, USA}
\email{mpallocc@math.ncsu.edu}
\author{Tom Lada}
\address{Department of Mathematics, North Carolina State University,
Raleigh NC 27695, USA}
\email{lada@math.ncsu.edu}

\begin{abstract}
We construct an example of an $A_{\infty}$ algebra structure defined over a finite dimensional graded vector space.
\end{abstract}

\dedicatory{Dedicated to Tornike Kadeishvili on the occasion of his 60th birthday}

\maketitle

\parskip 4pt

\section*{Introduction}

$A_{\infty}$ algebras (or sha algebras) and $L_{\infty}$ (or sh Lie algebras) have been topics of current research.  Construction of small examples of these algebras can play a role in gaining insight into deeper properties of these structures.  These examples may prove useful in developing a deformation theory as well as a representation theory for these algebras.

In \cite{DL}, an $L_\infty$ algebra structure on the graded vector space $V=V_0\oplus V_1$ where $V_0$ is a 2 dimensional vector space, and $V_1$ is a 1 dimensional space, is discussed.  This surprisingly rich structure on this small graded vector space was shown by Kadeishvili and Lada, \cite{KL}, to be an example of an open-closed homotopy algebra (OCHA) defined by Kajiura and Stasheff \cite{KS}.  In an unpublished note \cite{D} M. Daily constructs a variety of other $L_\infty$ algebra structures  on this same vector space.   

In this article we add to this collection of structures on the vector space $V$ by providing a detailed construction of non-trivial $A_\infty$ algebra data for $V$.

\section{$A_{\infty}$ Algebras}
We first recall the definition of an $A_{\infty}$ algebra (Stasheff \cite{JS}). 

\begin{definition}\label{defA}
 Let $V$ be a graded vector space.  An $A_{\infty}$ structure on $V$ is a collection of linear maps $m_k: V^{\otimes k} \rightarrow V$ of degree $2-k$ that satisfy the identity
 $$ \displaystyle\sum_{\lambda=0}^{n-1} \displaystyle\sum_{k=1}^{n-\lambda}  \alpha m_{n-k+1}(x_1 \otimes \cdots \otimes x_{\lambda} \otimes  m_k(x_{\lambda +1} \otimes \cdots \otimes x_{\lambda + k}) \otimes x_{\lambda + k +1} \otimes \cdots \otimes x_n) =0$$
where $\alpha = (-1)^{k + \lambda + k \lambda + kn + k(|x_1| + \cdots + |x_{\lambda}|)}$, for all $n \geq 1$.\\
\end{definition}

This utilizes the cochain complex convention.  One may alternatively utilize the chain complex convention by requiring each map $m_k$ to have degree $k-2$.

We will define the desuspension of V (denoted $\downarrow\! V$) as the graded vector space with indices given by $(\downarrow\! V)_n = V_{n+1}$, and the desuspension operator, $\downarrow: V \rightarrow (\downarrow\! V)$ (resp. suspension operator $\uparrow: (\downarrow V) \rightarrow V$) in the natural sense.   We will also employ the usual Koszul sign convention in this setting:  whenever two symbols (objects or maps) of degree $p$ and $q$ are commuted, a factor of $(-1)^{pq}$ is introduced.  Subsequently, $\uparrow ^{\otimes n} \circ \downarrow ^{\otimes n} = (-1)^{\frac{n(n-1)}{2}} id$ and $\downarrow\! x_1 \otimes \downarrow\! x_2 \otimes \cdots \otimes \downarrow\! x_n = (-1)^{\sum_{i=1}^{n}(n-i)|x_i|} \downarrow\!^{\otimes n} (x_1 \otimes x_2 \otimes \cdots \otimes x_n)$.

Stasheff also showed that an $A_{\infty}$ structure on $V$ is equivalent to the existence of a degree 1 coderivation $D: T^*(\downarrow\! V) \rightarrow T^*(\downarrow\! V)$ with the property $D^2 = 0$. 
Here, $T^*(\downarrow\! V)$ is the tensor coalgebra on the graded vector space $\downarrow\! V$.  Such a coderivation is constructed by defining $m_k^{\prime}: (\downarrow\! V^{\otimes k}) \rightarrow \downarrow\! V$ by $m_k^{\prime}  = (-1)^{\frac{k(k-1)}{2}} \downarrow\! \circ m_k \circ \uparrow ^{\otimes k}$ and then extending each $m_k^{\prime}$ to a coderivation on $T^*(\downarrow\! V)$.  By ``abuse of notation", $m_k^{\prime}$ can be described by
\begin{align*}
&m_k^{\prime} (\downarrow\! x_1 \otimes \downarrow\! x_2 \otimes \cdots \otimes \downarrow\! x_n )= \sum\limits_{i=0}^{n-1} (1^{\otimes i} \otimes m_k^{\prime} \otimes 1^{\otimes n-i-1})(\downarrow\! x_1 \otimes \downarrow\! x_2 \otimes \cdots \otimes \downarrow\! x_n)\\
&\! = \! \sum\limits_{i=0}^{n-1} (-1)^{(k-2) \!(|x_1| + \cdots +|x_i|-i)} (\downarrow\! x_1 \otimes \!\cdots \!\otimes \downarrow\! x_i \otimes m_k^{\prime}(\downarrow\! x_{i+1} \otimes \!\cdots \!\otimes \downarrow\! x_{i+k} ) \otimes \downarrow\! x_{i+k+1} \otimes \!\cdots \! \otimes \downarrow\! x_n)
\end{align*}
We then define $D := \displaystyle\sum_{k=1}^{\infty} m_k^{\prime}$.

\section{A Finite Dimensional Example}

Let $V$ denote the graded vector space given by $V= \bigoplus V_n$ where $V_0$ has basis  $<v_1,v_2>$, $V_1$ has basis $ <w>$, and $V_n = 0$ for $n \neq 0,1$.   Define a structure on $V$ by the following linear maps $m_n:  V^{\otimes n} \rightarrow V$:
\begin{align*}
m_1(v_1) =  m_1 (v_2) &= w\\
For \; n \geq 2:\;\;
m_n(v_1 \otimes w^{\otimes k} \otimes v_1 \otimes w^{\otimes (n-2)-k})  &= (-1)^k s_n v_1 , \; 0 \leq k \leq n-2\\
m_n(v_1 \otimes w^{\otimes (n-2)} \otimes v_2) &= s_{n+1} v_1\\
m_n(v_1 \otimes w^{\otimes(n-1)}) &= s_{n+1} w
\end{align*}

where $s_n = (-1)^{\frac{(n+1)(n+2)}{2}}$, and $m_n= 0$ when evaluated on any element of $V^{\otimes n}$ that is not listed above.  It is worth noting that this assumes the cochain convention regarding $A_{\infty}$ algebra structures.  Hence $|v_1|=|v_2|=0$ and $|w|=1$.\\

\begin{theorem}\label{thm}
The maps defined above give the graded vector space $V$  an $A_{\infty}$ algebra structure.  
\end{theorem}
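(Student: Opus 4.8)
The plan is to verify the condition recalled in Section~1 that is equivalent to the $A_\infty$ axioms: that the coderivation $D=\sum_{k\ge 1}m_k'$ on $T^*(\downarrow V)$ satisfies $D^2=0$. In practice I would check this in the form of Definition~\ref{defA} directly, evaluating the left-hand side for each arity $n$ on every basis tensor $x_1\otimes\cdots\otimes x_n$ with $x_i\in\{v_1,v_2,w\}$, and treating the desuspension signs via the formulas of Section~1 together with the explicit value $s_n=(-1)^{(n+1)(n+2)/2}$.

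The observation that makes this tractable is that $m_k$ vanishes on every basis tensor outside the short list defining the structure, and that every nonzero value of every $m_k$ lies in $\mathrm{span}\{v_1,w\}$: no $m_k$ produces $v_2$, and every tensor on which some $m_k$ is nonzero begins with $v_1$. Hence a summand $\alpha\, m_{n-k+1}(x_1\otimes\cdots\otimes m_k(x_{\lambda+1}\otimes\cdots\otimes x_{\lambda+k})\otimes\cdots\otimes x_n)$ can be nonzero only when the inner substring is one of the admissible shapes --- Type~A $\,v_1 w^{\otimes a}v_1 w^{\otimes b}$, Type~B $\,v_1 w^{\otimes a}v_2$, Type~C $\,v_1 w^{\otimes a}$, or a single $v_1$ or $v_2$ hit by $m_1$ --- \emph{and} the tensor obtained by contracting it is again admissible. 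Running through the positions that the output of an inner contraction can occupy inside an admissible outer tensor, one finds that the arity-$n$ identity is termwise zero unless $x_1\otimes\cdots\otimes x_n$ begins with $v_1$ and has at most three entries in $\{v_1,v_2\}$; the surviving cases lie among the families (i) $v_1 w^{\otimes a}v_1 w^{\otimes b}v_1 w^{\otimes c}$; (ii) two $v_1$'s and the rest $w$'s; (iii) two $v_1$'s, one $v_2$, and the rest $w$'s; (iv) one $v_1$, one $v_2$, and the rest $w$'s; (v) one $v_1$, two $v_2$'s, and the rest $w$'s; together with the three one-letter tensors at $n=1$, for which the identity reads $m_1 m_1=0$. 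I would prove this reduction first, since it replaces an infinite family of identities by finitely many types.

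For each family I would then enumerate the surviving summands --- elementary combinatorics, a summand surviving exactly when the chosen admissible substring has admissible contraction --- and compute each coefficient from $\alpha$ and from the value of $s_n$. In families (iv), (v), and the sub-case of (iii) in which the $v_2$ lies strictly between the two $v_1$'s, only two summands survive, and cancellation is a one-line comparison of $\pm s_p s_q$ with $\pm s_n$; the cancelling pair is ``contract the maximal admissible initial substring'' against ``apply $m_1$ to the interior $v$-entry''. In families (i), (ii), and the remaining sub-case of (iii), the number of surviving summands grows linearly with the lengths of the interior runs of $w$'s, the terms being obtained by sliding the cut-points of a Type~A / Type~C / $m_1$ contraction along those runs; here the surviving terms split into a bounded number of groups indexed by which run of $w$'s (or which adjacent pair of $v_1$'s) absorbs the contraction, and, after substituting the closed form for $s_n$ and reducing the resulting quadratic exponents modulo $2$, the resummed group contributions cancel.

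I expect the signs to be the entire difficulty: $s_n$ is defined precisely so that these cancellations occur, so the theorem is exactly the statement that this choice works. The hardest case is family (i), where up to three distinct admissible contractions compete (a Type~A contraction straddling either adjacent pair of $v_1$'s, or a Type~C / $m_1$ contraction absorbed by the second or third run of $w$'s), the factor $(-1)^{k}$ in the Type~A formula enters the inner and outer evaluations with unequal arguments, and one must check that the resummation closes for \emph{all} $a,b,c$ rather than leaving a residue. The mixed families are easier but still require care with the contribution of $|w|=1$ to $\alpha$ in the term where $m_1$ acts on an interior entry. One thing I would verify rather than assume is that in every family the surviving terms cancel exactly, since a single family failing to close would break the construction.
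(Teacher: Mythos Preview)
Your case structure is essentially the same as the paper's, but you take a harder road to get there. The paper explicitly rejects verifying Definition~\ref{defA} directly (``a rather daunting task, due to the varying signs $s_n$'') and instead passes to the desuspended picture via Lemma~\ref{lem1}: the point of that lemma is that after desuspension \emph{all} the signs vanish---each $m_n'$ outputs $\downarrow v_1$ or $\downarrow w$ with coefficient $+1$, no $s_n$ and no $(-1)^k$. The paper then uses Lemma~\ref{lem2} to reduce $D^2$ on $n$ inputs inductively to $\sum_{i+j=n+1} m_i'm_j'$, and the remaining case analysis (your families (i)--(v)) involves only the Koszul sign from commuting $m_j'$ past earlier tensor factors, which is just $\pm 1$ depending on parity of the position. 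So where you anticipate ``the signs to be the entire difficulty'' and plan to reduce quadratic exponents in $s_n$ mod $2$ family by family, the paper absorbs all of that into one preliminary computation (Lemma~\ref{lem1}) and is left with nearly sign-free counting. Your approach would work, and your enumeration of surviving summands matches the paper's, but you are effectively re-deriving the content of Lemma~\ref{lem1} separately inside each family rather than once at the outset; the paper's route explains \emph{why} the signs $s_n$ were chosen as they were, rather than verifying post hoc that they happen to close.
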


The proof of this theorem relies on two lemmas:

\begin{lemma}\label{lem1}
Let $m_n^{\prime} : = (-1)^{\frac{n(n-1)}{2}} \downarrow\! \circ m_n \circ \uparrow ^{\otimes n} : \;(\downarrow\! V)^{\otimes n} \rightarrow \downarrow\! V$.  Under the preceding definitions for $m_n$ and $V$, we obtain the following formulas for $m_n^{\prime}$:
\begin{align*}
m_1^{\prime} &= \downarrow\! m_1\\
For\; n\geq 2:\;\; m_n^{\prime}(\downarrow\! v_1 \otimes (\downarrow\! w)^{\otimes k} \otimes \downarrow\! v_1 \otimes (\downarrow\! w)^{\otimes (n-2)-k}) &= \downarrow\! v_1,\;  0 \leq k \leq n-2\\
m_n^{\prime}(\downarrow\! v_1 \otimes (\downarrow\! w)^{\otimes (n-2)} \otimes \downarrow\! v_2) &= \downarrow\! v_1\\
m_n^{\prime}(\downarrow\! v_1 \otimes (\downarrow\! w)^{\otimes (n-1)}) &= \downarrow\! w
\end{align*}

\end{lemma}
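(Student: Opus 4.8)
\emph{Proof plan.} The plan is to reduce the lemma to one ``pushforward'' identity and then dispatch each of the four families of non-zero inputs by a finite parity check.

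First I would establish that for any homogeneous $x_1,\dots,x_n\in V$,
$$ m_n^{\prime}(\downarrow\! x_1 \otimes \cdots \otimes \downarrow\! x_n) = (-1)^{\sum_{i=1}^{n}(n-i)|x_i|}\,\downarrow\! m_n(x_1 \otimes \cdots \otimes x_n). $$
This comes straight from the two Koszul identities recalled in Section~1: write $\downarrow\! x_1 \otimes \cdots \otimes \downarrow\! x_n = (-1)^{\sum_{i}(n-i)|x_i|}\downarrow^{\otimes n}(x_1 \otimes \cdots \otimes x_n)$, apply $\uparrow^{\otimes n}$ and use $\uparrow^{\otimes n}\circ\downarrow^{\otimes n} = (-1)^{n(n-1)/2}\,\mathrm{id}$, then apply $m_n$ followed by $\downarrow$. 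The factor $(-1)^{n(n-1)/2}$ in the definition of $m_n^{\prime}$ cancels the one produced by $\uparrow^{\otimes n}\circ\downarrow^{\otimes n}$, leaving only the Koszul sign $(-1)^{\sum_i(n-i)|x_i|}$.

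Since $m_n^{\prime}$ has $m_n$ as an inner factor, it vanishes on every input on which $m_n$ vanishes, so only the four listed families require checking. For $n=1$ the exponent $\sum_{i=1}^{1}(1-i)|x_i|$ is empty, giving $m_1^{\prime}=\downarrow\! m_1$ at once. For $n\ge 2$ the only degree-one basis vectors occurring in the listed inputs are the copies of $w$, so $\sum_i(n-i)|x_i|$ is just $\sum(n-i)$ over the positions $i$ occupied by a $w$. In the families $\downarrow\! v_1 \otimes (\downarrow\! w)^{\otimes(n-2)} \otimes \downarrow\! v_2$ and $\downarrow\! v_1 \otimes (\downarrow\! w)^{\otimes(n-1)}$ the $w$'s form a single contiguous block, and in both cases this sum equals $\frac{(n-1)(n-2)}{2}$; combined with $\downarrow$ applied to $s_{n+1}(\,\cdot\,)$ and the value $s_{n+1}=(-1)^{(n+2)(n+3)/2}$, the total exponent works out to $n(n+1)+4$, which is even, so the sign is $+1$ and the asserted values follow. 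In the family $\downarrow\! v_1 \otimes (\downarrow\! w)^{\otimes k} \otimes \downarrow\! v_1 \otimes (\downarrow\! w)^{\otimes(n-2)-k}$ the $w$'s split into a block of length $k$ and a block of length $(n-2)-k$, so the Koszul exponent now depends on $k$; here the $(-1)^k$ built into $m_n$, together with $s_n=(-1)^{(n+1)(n+2)/2}$, is exactly the compensation needed, and one finds the total exponent equals $n(n-1)+2k+4$, again even.

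The only part demanding care is this last piece of bookkeeping: writing $E$ for the Koszul exponent, one must sum $n-i$ over two separate arithmetic ranges determined by $k$ and $n$ and then verify that $E+k+\frac{(n+1)(n+2)}{2}$ is even for every admissible pair $(n,k)$, and similarly that $E+\frac{(n+2)(n+3)}{2}$ is even in the two contiguous cases. I expect this modular-arithmetic check --- which is in fact the computation that dictates the choice of the scalars $s_n$ --- to be the sole obstacle; with it in place Lemma~\ref{lem1} is immediate.
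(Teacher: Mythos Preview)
Your argument is correct and in spirit identical to the paper's: both compute the Koszul sign produced by $\uparrow^{\otimes n}$ acting on $\downarrow\! x_1\otimes\cdots\otimes\downarrow\! x_n$ and then verify that it cancels against the built-in scalars $(-1)^k s_n$ and $s_{n+1}$. The organization, however, is tidier than the paper's. The paper first reduces the Koszul exponent modulo~$2$, obtaining two parity-dependent formulas (for $n$ even the sign is governed by $\sum |x_{2i-1}|$, for $n$ odd by $\sum |x_{2i}|$), and then checks each family by a further case split on the parity of $n$, of $k$, and even of $n/2$. Your single identity $m_n'(\downarrow\! x_1\otimes\cdots\otimes\downarrow\! x_n) = (-1)^{\sum_i (n-i)|x_i|}\downarrow\! m_n(x_1\otimes\cdots\otimes x_n)$ avoids all of that branching: summing $(n-i)$ over the $w$-positions and adding the $s_n$ or $s_{n+1}$ exponent gives the closed forms $n(n-1)+2k+4$ and $n(n+1)+4$, whose evenness is immediate from $n(n\pm 1)\equiv 0\pmod 2$. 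The paper's parity formula is exactly the mod-$2$ reduction of yours, so nothing is lost; you simply postpone the reduction and thereby collapse the paper's four-by-two case analysis into two one-line arithmetic checks.
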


\begin{remark}
Each $m_n^{\prime}$ is of degree 1.
\end{remark}

\begin{proof}[Proof of Lemma \ref{lem1}]
$m_1^{\prime}(\downarrow\!x) = (-1)^0 \downarrow\! \circ m_1 \circ \uparrow(\downarrow\! x) = \downarrow\! m_1(x)$ for any $x$.\\

Now let $n \geq 2$.  The majority of the work here is centered around computing the signs associated with the graded setting.  The elements $x_i$ and the maps $\uparrow$, $\downarrow$, and $m_n$ all contribute to an overall sign via their degrees.  Observing these signs, we find $$m_n^{\prime} (\downarrow\! x_1 \otimes \downarrow\! x_2 \otimes \cdots \otimes \downarrow\! x_n) =(-1)^{\frac{n(n-1)}{2}} \downarrow\! \circ m_n \circ \uparrow ^{\otimes n} (\downarrow\! x_1 \otimes \downarrow\! x_2 \otimes \cdots \otimes \downarrow\! x_n) $$ 
\begin{equation*}
=
\begin{cases}
(-1)^{\sum_{i=1}^{n/2} |x_{2i-1}|} \downarrow\! m_n(x_1 \otimes x_2 \otimes \cdots \otimes x_n) & \text{if n is even.}
\\
(-1)^{\sum_{i=1}^{(n-1)/2} |x_{2i}|} \downarrow\! m_n(x_1 \otimes x_2 \otimes \cdots \otimes x_n) & \text{if n is odd.}
\end{cases}
\end{equation*}

First consider $m_n^{\prime}(\downarrow\! v_1 \otimes (\downarrow\! w)^{\otimes k} \otimes \downarrow\! v_1 \otimes (\downarrow\! w)^{\otimes (n-2)-k}),\;  0 \leq k \leq n-2$.  This computation may be divided into 4 cases based on the parity of $n$ and $k$.  If $n$ and $k$ are both even, then: 
\begin{align*}
m_n^{\prime}(\downarrow\! v_1 \otimes (\downarrow\! w)^{\otimes k} \otimes \downarrow\! v_1 \otimes (\downarrow\! w)^{\otimes (n-2)-k}) & = (-1)^{|v_1| + (\frac{n}{2} -1)|w|} \downarrow\! m_n(v_1 \otimes w^{\otimes k} \otimes v_1 \otimes w^{\otimes (n-2)-k})\\
& = (-1)^{0 + \frac{n}{2} -1} (-1)^k s_n \downarrow\! v_1 \\
& = (-1)^{\frac{n}{2} -1} (-1)^{\frac{(n+1)(n+2)}{2}} \downarrow\! v_1 \\
& = (-1)^{\frac{n}{2} -1} (-1)^{(n+1)(\frac{n}{2}+1)} \downarrow\! v_1\;\; (*)
\end{align*}
If $\frac{n}{2}$ is even, then $(*) = (-1)^{\text{odd}} (-1)^{\text{odd*odd}} \downarrow\! v_1 = \downarrow\! v_1$ where `odd' denotes an odd number.\\
If $\frac{n}{2}$ is odd, then $(*) = (-1)^{\text{even}} (-1)^{\text{odd*even}} \downarrow\! v_1 = \downarrow\! v_1$ where `even' denotes an even number.\\

A similar argument holds in the remaining 3 cases.  Hence $$ m_n^{\prime}(\downarrow\! v_1 \otimes (\downarrow\! w)^{\otimes k} \otimes \downarrow\! v_1 \otimes (\downarrow\! w)^{\otimes (n-2)-k}) = \downarrow\! v_1,\;  0 \leq k \leq n-2$$

Now consider $m_n^{\prime}(\downarrow\! v_1 \otimes (\downarrow\! w)^{\otimes (n-2)} \otimes \downarrow\! v_2)$.  This computation may be divided into 2 cases based on the parity of $n$.  If $n$ is even, then: 

\begin{align*}
m_n^{\prime}(\downarrow\! v_1 \otimes (\downarrow\! w)^{\otimes (n-2)} \otimes \downarrow\! v_2) &= (-1)^{|v_1| + (\frac{n}{2}-1)|w|} m_n(\downarrow\! v_1 \otimes (\downarrow\! w)^{\otimes (n-2)} \otimes \downarrow\! v_2)\\
&= (-1)^{\frac{n}{2}-1} s_{n+1} \downarrow\! v_1\\
& = (-1)^{\frac{n}{2}-1} (-1)^{\frac{(n+2)(n+3)}{2}} \downarrow\! v_1\\
&= (-1)^{\frac{n}{2}-1} (-1)^{(\frac{n}{2}-1)(n+3)} \downarrow\! v_1 \;\; (*)
\end{align*}
If $\frac{n}{2}$ is even, then $(*) = (-1)^{\text{odd}} (-1)^{\text{odd*odd}} \downarrow\! v_1 = \downarrow\! v_1$ where `odd' denotes an odd number.\\
If $\frac{n}{2}$ is odd, then $(*) = (-1)^{\text{even}} (-1)^{\text{even*odd}} \downarrow\! v_1 = \downarrow\! v_1$ where `even' denotes an even number.\\

A similar argument holds in the case that $n$ is odd.  Hence $$m_n^{\prime}(\downarrow\! v_1 \otimes (\downarrow\! w)^{\otimes (n-2)} \otimes \downarrow\! v_2) = \downarrow\! v_1$$

The preceding argument may also be repeated for $m_n^{\prime}(\downarrow\! v_1 \otimes (\downarrow\! w)^{\otimes (n-1)})$. Hence $$m_n^{\prime}(\downarrow\! v_1 \otimes (\downarrow\! w)^{\otimes (n-1)}) = \downarrow\! w$$
\end{proof}
\medskip

\begin{lemma}\label{lem2}
Let $D = \displaystyle\sum_{k=1}^{\infty} m_k^{\prime}$ where $m_k^{\prime}$ is defined in Lemma \ref{lem1}.  Let $n \geq 2$ be a positive integer.  Suppose $D^2 (\downarrow\! x_1 \otimes \downarrow\! x_2 \otimes \cdots \otimes  \downarrow\! x_{m}) = 0 \; \forall \; x_i  \in   V$, $1\leq m \leq n-1$.    \\

Then $D^2 (\downarrow\! x_1 \otimes \downarrow\! x_2 \otimes \cdots \otimes \downarrow\! x_{n}) = \displaystyle\sum_{i+j = n+1} m_i^{\prime} m_j^{\prime} (\downarrow\! x_1 \otimes \downarrow\! x_2 \otimes \cdots \otimes \downarrow\! x_{n})$
\end{lemma}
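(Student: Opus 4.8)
The strategy is to unwind the definition of $D$ as a coderivation acting on the tensor coalgebra $T^*(\downarrow\! V)$ and to observe that the only potentially nonzero components of $D^2$ on $(\downarrow\! V)^{\otimes n}$ that land in $\downarrow\! V$ are exactly the terms $m_i'm_j'$ with $i+j=n+1$. First I would recall that $D=\sum_{k\ge 1}m_k'$ is a coderivation of degree $1$ on $T^*(\downarrow\! V)$, so $D^2=\tfrac12[D,D]$ is again a coderivation; consequently $D^2$ is determined by its composition with the projection $\pi:T^*(\downarrow\! V)\to\downarrow\! V$ onto the first tensor factor, i.e.\ by the collection of maps $\pi\circ D^2|_{(\downarrow\! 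V)^{\otimes n}}:(\downarrow\! V)^{\otimes n}\to\downarrow\! V$ for all $n$. Thus it suffices to compute $\pi D^2$ on $(\downarrow\! V)^{\otimes n}$.

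Next I would expand $D^2 = \big(\sum_i m_i'\big)\big(\sum_j m_j'\big)$ applied to $\downarrow\! x_1\otimes\cdots\otimes\downarrow\! x_n$, using the ``abuse of notation'' formula from the excerpt that describes the coderivation extension of each $m_j'$: applying $m_j'$ to a word of length $n$ yields a sum of words of length $n-j+1$ (one for each placement of the $j$ consecutive inputs being contracted), and then applying $m_i'$ to such a word and projecting onto $\downarrow\! V$ forces $i = n-j+1$, i.e.\ $i+j = n+1$. Every other composite $m_i'm_j'$ produces a word of length $\ge 2$, which is killed by $\pi$. Hence $\pi D^2(\downarrow\! x_1\otimes\cdots\otimes\downarrow\! x_n) = \sum_{i+j=n+1}\pi\, m_i'm_j'(\downarrow\! x_1\otimes\cdots\otimes\downarrow\! x_n)$, and since $m_i'm_j'$ applied here already lands in $\downarrow\! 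V$, the projection is the identity on these terms, giving the claimed formula $D^2(\downarrow\! x_1\otimes\cdots\otimes\downarrow\! x_n) = \sum_{i+j=n+1}m_i'm_j'(\downarrow\! x_1\otimes\cdots\otimes\downarrow\! x_n)$.

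The role of the inductive hypothesis $D^2|_{(\downarrow\! V)^{\otimes m}}=0$ for $m\le n-1$ is to handle the higher tensor components: because $D^2$ is a coderivation, its value on $(\downarrow\! V)^{\otimes n}$ in tensor-degrees $\ge 2$ is built (via the cofree/coderivation formula) out of the values of $D^2$ on shorter words, all of which vanish by hypothesis; so the only surviving component is the tensor-degree-$1$ component computed above. Concretely, one writes $D^2 = D\circ D$, notes that $D^2$ restricted to $(\downarrow\! V)^{\otimes n}$ decomposes along the coalgebra structure, and checks that each ``length $\ge 2$'' piece factors through some $D^2|_{(\downarrow\! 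V)^{\otimes m}}$ with $m<n$, hence is zero.

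\textbf{Main obstacle.} The routine but delicate part is bookkeeping the Koszul signs: the coderivation extension formula for $m_j'$ carries signs $(-1)^{(k-2)(|x_1|+\cdots+|x_i|-i)}$ depending on where the contraction occurs, and one must be careful that the cancellation of the ``length $\ge 2$'' terms against the inductive hypothesis is genuinely term-by-term (or at least block-by-block) rather than relying on a miraculous global cancellation. The conceptual content, however, is just the standard fact that a coderivation on a tensor coalgebra is determined by its corestriction to cogenerators, together with the length-counting argument $i+j=n+1$; once that is in place the lemma follows.
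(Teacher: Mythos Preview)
Your argument is correct but takes a genuinely different route from the paper. You use the structural fact that $D^2=\tfrac12[D,D]$ is again a coderivation on $T^*(\downarrow V)$, hence determined by its corestriction $\pi D^2$; the coderivation extension formula then shows that the component of $D^2$ on $(\downarrow V)^{\otimes n}$ landing in $(\downarrow V)^{\otimes \ell}$ for $\ell\ge 2$ is built from $\pi D^2$ on words of length $n-\ell+1\le n-1$, which vanishes by the inductive hypothesis, while the $\ell=1$ component is $\sum_{i+j=n+1}m_i'm_j'$ by length counting. The paper never invokes that $D^2$ is a coderivation; instead it attacks $\sum_{i+j\le n}m_i'm_j'$ directly, sorting the resulting terms into four types according to whether the first and last tensor factors of the output are untouched or carry an $m_k'$: Types~1--3 are handled by factoring off the untouched end(s) and recognizing what remains as $D^2$ on a shorter word, while Type~4 terms (both ends carry an $m'$) are paired and cancelled using that $m_i'$ and $m_j'$ act on disjoint blocks and each have degree~$1$. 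Your approach is cleaner and essentially sign-free once the coderivation fact is granted (so your ``main obstacle'' paragraph overstates the difficulty of your own method); the paper's approach is more elementary but requires the explicit Type~4 pairing. One organizational remark: in your second paragraph you pass from $\pi D^2$ to $D^2$ before explaining, in the third paragraph, why the higher-length components vanish; reversing that order would make the logic linear.
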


\begin{proof}
We first note that 
$$D^2 (\downarrow\! x_1 \otimes \downarrow\! x_2 \otimes \cdots \otimes \downarrow\! x_{n}) = \displaystyle\sum_{i+j\leq n+1} m_i^{\prime} m_j^{\prime} (\downarrow\! x_1 \otimes \downarrow\! x_2 \otimes \cdots \otimes \downarrow\! x_{n})$$
 since $m_k^{\prime} (\downarrow\! x_1 \otimes \downarrow\! x_2 \otimes \cdots \otimes \downarrow\! x_l) = 0$ for $k>l$.  So
 
\begin{align*} 
D^2 (\downarrow\! x_1 \otimes \downarrow\! x_2 \otimes \cdots \otimes \downarrow\! x_{n}) =& \displaystyle\sum_{i+j\leq n} m_i^{\prime} m_j^{\prime} (\downarrow\! x_1 \otimes \downarrow\! x_2 \otimes \cdots \otimes \downarrow\! x_{n}) \\
+& \displaystyle\sum_{i+j = n+1} m_i^{\prime} m_j^{\prime} (\downarrow\! x_1 \otimes \downarrow\! x_2 \otimes \cdots \otimes \downarrow\! x_{n})
\end{align*}

Hence it suffices to show that  $ \displaystyle\sum_{i+j\leq n} m_i^{\prime} m_j^{\prime} (\downarrow\! x_1 \otimes \downarrow\! x_2 \otimes \cdots \otimes \downarrow\! x_{n}) =0$
 
Consider $\displaystyle\sum_{i+j\leq n} m_i^{\prime} m_j^{\prime} (\downarrow\! x_1 \otimes \downarrow\! x_2 \otimes \cdots \otimes \downarrow\! x_{n}) $:  Since $i+j \leq n$, we can break this sum up into 4 different types of of elements in $(\downarrow\! V)^{\otimes k} $ based on whether the first and last terms in the tensor product contain $m_i^{\prime}$ or $m_j^{\prime}$:

\noindent $\bullet$ Type 1: Elements with first term $\downarrow\! x_1$ and last term $\downarrow\! x_n$ \\
\indent (example: $\downarrow\! x_1 \otimes \downarrow\! x_2 \otimes m_1'(\downarrow\! x_3) \otimes m_2'(\downarrow\! x_4 \otimes \downarrow\! x_5) \otimes \downarrow\! x_6$)\\
\noindent $\bullet$ Type 2: Elements with first term $\downarrow\! x_1$ and last term containing $m_k'$ for some $k$ \\
\indent (example: $\downarrow\! x_1 \otimes \downarrow\! x_2 \otimes m_3'(\downarrow\! x_3 \otimes m_2'(\downarrow\! x_4 \otimes \downarrow\! x_5) \otimes \downarrow\! x_6)$)\\
\noindent $\bullet$ Type 3: Elements with first term containing $m_k'$ for some $k$ and last term $\downarrow\! x_n$\\
\indent (example: $m_2'(\downarrow\! x_1 \otimes \downarrow\! x_2) \otimes m_1'(\downarrow\! x_3) \otimes \downarrow\! x_4 \otimes \downarrow\! x_5 \otimes \downarrow\! x_6$)\\
\noindent $\bullet$ Type 4: Elements with first term containing $m_k'$ and last term containing $m_l'$ for some $k,l$ \\
\indent (example: $m_2'(\downarrow\! x_1 \otimes \downarrow\! x_2) \otimes \downarrow\! x_3 \otimes \downarrow\! x_4 \otimes m_2'(\downarrow\! x_5 \otimes \downarrow\! x_6)$)\\ 

Now each term of type 1 must be produced by $m_i' m_j'$ with $i+j \leq n-1$.  Hence, by factorization of tensor products, all possible terms of type 1 are given by: 
\begin{align*}
(-1)^{2|x_1|-2} \Big{(} \downarrow\! x_1 \otimes \big{(} \displaystyle\sum_{i+j\leq n-1} m_i^{\prime} m_j^{\prime} (\downarrow\! x_2 \otimes \downarrow\! x_3 \otimes \cdots \otimes \downarrow\! x_{n-1}) \big{)} \Big{)} \otimes \downarrow\! x_n\\
= \; \Big{(} \downarrow\! x_1 \otimes \big{(} D^2 (\downarrow\! x_2 \otimes \downarrow\! x_3 \otimes \cdots \otimes \downarrow\! x_{n-1}) \big{)} \Big{)} \otimes \downarrow\! x_n\\
=\; \big{(} \downarrow\! x_1 \otimes 0 \big{)} \; \otimes \downarrow\! x_n\\
=\; 0
\end{align*}

\noindent since $D^2 = 0$ when evaluated on $n-2$ terms.  A similar argument holds for the type 2 and type 3 summands.  \\

We now consider type 4 terms.  Consider an arbitrary element of type 4: 
$$m_i'(\downarrow\! x_1 \otimes \cdots \otimes \downarrow\! x_i) \otimes \downarrow\! x_{i+1} \otimes \cdots \otimes \downarrow\! x_{n-j} \otimes m_j'(\downarrow\! x_{n-j+1} \otimes \cdots \otimes \downarrow\! x_n)$$

\noindent Consider how this arbitrary element is generated:  We begin with
$$m_i' m_j' (\downarrow\! x_1 \otimes \cdots \otimes \downarrow\! x_n)$$
We then apply $m_j'$ to the last $j$ terms, which yields:
$$ (-1)^{|x_1| + \cdots + |x_{n-j}| - (n-j)} m_i ' (\downarrow\! x_1 \otimes \cdots \otimes \downarrow\! x_{n-j} \otimes m_j'(\downarrow\! x_{n-j+1} \otimes \cdots \otimes \downarrow\! x_n))$$
Finally we apply $m_i'$ to the first $i$ terms:
$$(-1)^{|x_1| + \cdots + |x_{n-j}| - (n-j)} m_i'(\downarrow\! x_1 \otimes \cdots \otimes \downarrow\! x_i) \otimes \downarrow\! x_{i+1} \otimes \cdots \\ \cdots \otimes \downarrow\! x_{n-j} \otimes m_j'(\downarrow\! x_{n-j+1} \otimes \cdots \otimes \downarrow\! x_n) \;\; (*)$$

Each of these arbitrary type 4 elements can be paired up with an element generated by $m_j'm_i'$ as follows:  Begin with
$$m_j' m_i' (\downarrow\! x_1 \otimes \cdots \otimes \downarrow\! x_n) $$
Then apply $m_i'$ to the first $i$ terms:
$$m_j'(m_i'(\downarrow\! x_1 \otimes \cdots \otimes \downarrow\! x_i) \otimes \downarrow\! x_{i+1} \otimes \cdots \otimes \downarrow\! x_n) $$ 
Finally, apply $m_j'$ to the last $j$ terms:
$$(-1)^{|x_1| + \cdots + |x_{n-j}| - (n-j) +1}  m_i'(\downarrow\! x_1 \otimes \cdots \otimes \downarrow\! x_i) \otimes \downarrow\! x_{i+1} \otimes \cdots \\ \cdots \otimes \downarrow\! x_{n-j} \otimes m_j'(\downarrow\! x_{n-j+1} \otimes \cdots \otimes \downarrow\! x_n) \;\; (**) $$

Since these type 4 elements were arbitrary, and $(*) + (**) = 0$, all type 4 terms added together equal 0.  Hence, all type 1, 2, 3, and 4 terms yield 0, and so\\
$$ \displaystyle\sum_{i+j\leq n} m_i^{\prime} m_j^{\prime} (\downarrow\! x_1 \otimes \downarrow\! x_2 \otimes \cdots \otimes \downarrow\! x_{n}) =0$$
\end{proof}
\medskip

\begin{proof}[Proof of Theorem \ref{thm}]
It is clear that each map $m_n$ is of degree $2-n$.  To prove that these maps yield an $A_{\infty}$ structure, one may verify that they satisfy the identity given in definition \ref{defA}. However, this is a rather daunting task, due to the varying signs, $s_n$, accompanying the $m_n$ maps.  To utilize an alternative method of proof, we construct a degree 1 coderivation, $D$, as described in section 1. 

In the context of Theorem \ref{thm}, we may use the definition for $m_k^{\prime}$ given by Lemma \ref{lem1} to construct $D$.  It then suffices to show that $D^2 =0$. \\

We aim to prove $D^2=0$ by induction on the number of inputs for $D$.  It is worth first noting that  $D = \displaystyle\sum_{k=1}^{\infty} m_k^{\prime}$,  however $D(\downarrow\! x_1 \otimes \cdots \otimes \downarrow\! x_n) = \displaystyle\sum_{k=1}^{n} m_k^{\prime} (\downarrow\! x_1 \otimes \cdots \otimes \downarrow\! x_n)$ since $m_k^{\prime} (\downarrow\! x_1 \otimes \cdots\otimes  \downarrow\! x_n) = 0$ for $k \geq n$.\\

For $n=1$, we have $D^2(\downarrow\! x) = m_1^{\prime}m_1^{\prime} (\downarrow\! x)= \downarrow\! m_1^2(x) = 0 \; \forall \;x\; \in V$.  

Now assume $D^2(\downarrow\! x_1 \otimes \cdots  \otimes\downarrow\! x_{n-1}) = 0$.  We aim to show that  $D^2(\downarrow\! x_1 \otimes \cdots  \otimes\downarrow\! x_{n}) = 0$:\\

\begin{remark}
Since $m_i^{\prime}$ and $m_j^{\prime}$ are linear, it is sufficient to show that $D^2=0$ on only basis elements.
\end{remark}

By Lemma \ref{lem2}, $D^2(\downarrow\! x_1 \otimes \cdots  \otimes\downarrow\! x_{n}) = \displaystyle\sum_{i+j=n+1} m_i^{\prime}m_j^{\prime} (\downarrow\! x_1 \otimes \cdots  \otimes\downarrow\! x_{n})$, hence it suffices to show that $\displaystyle\sum_{i+j=n+1} m_i^{\prime}m_j^{\prime} (\downarrow\! x_1 \otimes \cdots  \otimes\downarrow\! x_{n})=0,\; \forall \; x_1 \cdots x_n \; \in \; V$.\\

It is advantageous to approach this problem from the bottom up, since $x_1 \cdots x_n \; \in \; V$ implies calculating $3^n$ different combinations of elements.  That is, we consider only nontrivial (nonzero) elements in the sum $\displaystyle\sum_{i+j=n+1} m_i^{\prime}m_j^{\prime} (\downarrow\! x_1 \otimes \cdots  \otimes\downarrow\! x_{n})$.  Now since $i+j=n+1$, we observe that $ m_i^{\prime}m_j^{\prime} (\downarrow\! x_1 \otimes \cdots  \otimes\downarrow\! x_{n})\; \in \; (\downarrow\! V)^{\otimes 1}$.  Since, by definition, $m_i^{\prime}$ cannot produce the element $\downarrow\! v_2$, the seemingly large task of considering nontrivial $ m_i^{\prime}m_j^{\prime} (\downarrow\! x_1 \otimes \cdots  \otimes\downarrow\! x_{n})$ yields only two possibilities:
\begin{center}
$m_i^{\prime}m_j^{\prime} (\downarrow\! x_1 \otimes \cdots  \otimes\downarrow\! x_{n}) = c\downarrow\! v_1$\\
\text{   or   }
$m_i^{\prime}m_j^{\prime} (\downarrow\! x_1 \otimes \cdots  \otimes\downarrow\! x_{n}) = c \downarrow\! w \text{ for some constant, c.}$
\end{center}

Therefore if $m_i^{\prime}m_j^{\prime} (\downarrow\! x_1 \otimes \cdots  \otimes\downarrow\! x_{n})\neq 0 \;  \text{for some} \; i+j=n+1$, then $\displaystyle\sum_{i+j=n+1} m_i^{\prime}m_j^{\prime} (\downarrow\! x_1 \otimes \cdots  \otimes\downarrow\! x_{n})$ is a sum of $\downarrow\! v_1$'s or $\downarrow\! w$'s.\\

We first consider the manner in which $m_i^{\prime}m_j^{\prime} (\downarrow\! x_1 \otimes \cdots  \otimes\downarrow\! x_{n})$ yields a $\downarrow\! w$:\\

By defintion of $m_n^{\prime}$, $\downarrow\! w$ must be produced by $m_i^{\prime}(\downarrow\! v_1 \otimes (\downarrow\! w)^{\otimes (i-1)}) \;(*)$.  To accomplish this, the original arrangement $\downarrow\! x_1 \otimes \cdots  \otimes\downarrow\! x_{n})$ must satisfy $x_1=v_1$ and must contain exactly one more `$v$' ($v=v_1$ or $v_2$).\\

\noindent $\bullet$ \textbf{Case 1: $v=v_1$}.  Let us consider $m_i^{\prime}m_j^{\prime}(\downarrow\! v_1 \otimes (\downarrow\! w)^{\otimes k} \otimes \downarrow\! v_1 \otimes (\downarrow\! w)^{\otimes (n-2)-k}),\; 0 \leq k \leq n-2$.\\
Now, to produce $(*)$, $m_j^{\prime}$ must `catch' $(1)$ both $\downarrow\! v_1$'s, or $(2)$ only the second $\downarrow\! v_1$.\\

\noindent $(1)$ We have $m_j^{\prime}(\downarrow\! v_1 \otimes (\downarrow\! w)^{\otimes k} \otimes \downarrow\! v_1 \otimes (\downarrow\! w)^{\otimes (n-2)-k}) = \downarrow\! v_1,\; k+2 \leq j \leq n.$\\
This yields $m_i^{\prime}(\downarrow\! v_1 \otimes (\downarrow\! w) ^{\otimes (n-j)}) = \downarrow\! w$.  Now since $k+2 \leq j \leq n$, there are $n-(k+2)+1 = n-k-1$ such terms in  $\displaystyle\sum_{i+j=n+1} m_i^{\prime}m_j^{\prime} (\downarrow\! v_1 \otimes (\downarrow\! w)^{\otimes k} \otimes \downarrow\! v_1 \otimes (\downarrow\! w)^{\otimes (n-2)-k})$.\\

\noindent $(2)$ We have $(-1)^{|v_1| + k|w| -(k+1)} m_i^{\prime}\Big(\!\downarrow\! v_1 \otimes (\downarrow\! w)^{\otimes k} \otimes \Big[ m_j^{\prime} (\downarrow\! v_1 \otimes (\downarrow\! w) ^{\otimes (j-1)}) \Big] \otimes (\downarrow\! w)^{\otimes (n-2)-k-(j-1)}\Big)=-\downarrow\! w,\; 1\leq j \leq n-k-1$.  Similarly, there are $(n-k-1)-1 +1 = n-k-1$ such terms in  $\displaystyle\sum_{i+j=n+1} m_i^{\prime}m_j^{\prime} (\downarrow\! v_1 \otimes (\downarrow\! w)^{\otimes k} \otimes \downarrow\! v_1 \otimes (\downarrow\! w)^{\otimes (n-2)-k})$.\\

$\Rightarrow \displaystyle\sum_{i+j=n+1} m_i^{\prime}m_j^{\prime} (\downarrow\! v_1 \otimes (\downarrow\! w)^{\otimes k} \otimes \downarrow\! v_1 \otimes (\downarrow\! w)^{\otimes (n-2)-k}) = (n-k-1) \downarrow\! w - (n-k-1) \downarrow\! w = 0$.\\

\noindent $\bullet$ \textbf{Case 2: $v=v_2$}.  Let us consider $m_i^{\prime}m_j^{\prime}(\downarrow\! v_1 \otimes (\downarrow\! w)^{\otimes k} \otimes \downarrow\! v_2 \otimes (\downarrow\! w)^{\otimes (n-2)-k}),\; 0 \leq k \leq n-2$.\\
Similarly, to produce $(*)$, $m_j^{\prime}$ must `catch' $(1)$ both $\downarrow\! v_1$ and $\downarrow\! v_2$, or $(2)$ only $\downarrow\! v_2$.\\

\noindent For $(1)$, the only nontrivial way to do this yields:\\
$$m_{n-k-1}^{\prime} (m_{k+2}^{\prime}(\downarrow\! v_1 \otimes (\downarrow\! w) ^{\otimes k} \otimes \downarrow\! v_2) \otimes (\downarrow\! w) ^{\otimes (n-2)-k})= \downarrow\! w$$

\noindent and for $(2)$, the only nontrivial way to do this yields:\\
$$(-1)^{|v_1| + k|w| -(k+1)} m_n^{\prime} (\downarrow\! v_1 \otimes (\downarrow\! w)^{\otimes k} \otimes m_1^{\prime} (\downarrow\! v_2) \otimes (\downarrow\! w)^{\otimes (n-2)-k} ) = -\downarrow\! w$$

$\Rightarrow \displaystyle\sum_{i+j=n+1} m_i^{\prime}m_j^{\prime} (\downarrow\! v_1 \otimes (\downarrow\! w)^{\otimes k} \otimes \downarrow\! v_1 \otimes (\downarrow\! w)^{\otimes (n-2)-k}) = \downarrow\! w - \downarrow\! w = 0$.\\

In either case, if $m_i^{\prime}m_j^{\prime} (\downarrow\! x_1 \otimes \cdots  \otimes\downarrow\! x_{n})$ produces $\downarrow\! w$'s, then 
$$\displaystyle\sum_{i+j=n+1} m_i^{\prime}m_j^{\prime} (\downarrow\! x_1 \otimes \cdots  \otimes\downarrow\! x_{n})=0.$$
\bigskip

We now consider the manner in which $m_i^{\prime}m_j^{\prime} (\downarrow\! x_1 \otimes \cdots  \otimes\downarrow\! x_{n})$ yields a $\downarrow\! v_1$:\\

By defintion of $m_n^{\prime}$, $\downarrow\! v_1$ must be produced by either $m_i^{\prime}(\downarrow\! v_1 \otimes (\downarrow\! w)^{\otimes k} \otimes \downarrow\! v_1 \otimes (\downarrow\!w)^{\otimes (i-2)-k})$ or $m_i^{\prime}(\downarrow\! v_1 \otimes (\downarrow\! w)^{\otimes (i-2)} \otimes \downarrow\! v_2)$.\\

\noindent $\bullet$ \textbf{Case 1:} $\downarrow\! v_1$ is produced by $m_i^{\prime}(\downarrow\! v_1 \otimes (\downarrow\! w)^{\otimes k} \otimes \downarrow\! v_1 \otimes (\downarrow\! w)^{\otimes (i-2)-k})$.\\

We examine the 4 different possibilities for which $m_j^{\prime}$ can yield this arrangement:\\
\begin{center}
$(i)$ $m_j^{\prime}$ produces  the first $\downarrow\! v_1$. $(ii)$ $m_j^{\prime}$ produces a $\downarrow\! w$ in $(\downarrow\! w)^{\otimes k}$.\\
 $(iii)$ $m_j^{\prime}$ produces the second $\downarrow\! v_1$. $(iv)$ $m_j^{\prime}$ produces a $\downarrow\! w$ in  $(\downarrow\! w)^{\otimes (i-2)-k}$.
\end{center}

A key observation to make here is that $(i),\;(ii),\;(iii),$ and $(iv)$ imply that the original arrangement $\downarrow\! x_1 \otimes \cdots \otimes \downarrow\! x_n$ must contain \underline{exactly} 3  $v$'s, once again with $x_1 = v_1$.  This yields 4 subcases:

$\circ$  \textit{Subcase 1}: We have $m_i^{\prime} m_j^{\prime}(\downarrow\! v_1 \otimes (\downarrow\! w)^{\otimes k} \otimes \downarrow\! v_1 \otimes (\downarrow\! w) ^{\otimes l} \otimes \downarrow\! v_1 \otimes (\downarrow\! w)^{\otimes n-k-l-3})$:

$\circ$ \textit{Subcase 2}: We have $m_i^{\prime} m_j^{\prime}(\downarrow\! v_1 \otimes (\downarrow\! w)^{\otimes k} \otimes \downarrow\! v_1 \otimes (\downarrow\! w) ^{\otimes l} \otimes \downarrow\! v_2 \otimes (\downarrow\! w)^{\otimes n-k-l-3})$:

$\circ$ \textit{Subcase 3}: We have $m_i^{\prime} m_j^{\prime}(\downarrow\! v_1 \otimes (\downarrow\! w)^{\otimes k} \otimes \downarrow\! v_2 \otimes (\downarrow\! w) ^{\otimes l} \otimes \downarrow\! v_1 \otimes (\downarrow\! w)^{\otimes n-k-l-3})$:

$\circ$ \textit{Subcase 4}: We have $m_i^{\prime} m_j^{\prime}(\downarrow\! v_1 \otimes (\downarrow\! w)^{\otimes k} \otimes \downarrow\! v_2 \otimes (\downarrow\! w) ^{\otimes l} \otimes \downarrow\! v_2 \otimes (\downarrow\! w)^{\otimes n-k-l-3})$:

Let us consider subcase 1:

\noindent $(i)\;m_j^{\prime}$ must take the first two $\downarrow\! v_1$'s.  We have:
$$m_i^{\prime} \Big(\Big[m_j^{\prime}(\downarrow\! v_1 \otimes (\downarrow\! w)^{\otimes k} \otimes \downarrow\! v_1 \otimes (\downarrow\! w)^{\otimes j-k-2}) \otimes (\downarrow\! w)^{\otimes l-(j-k-2)} \Big] \otimes \downarrow\! v_1 \otimes (\downarrow\! w)^{\otimes n-k-l-3}\Big) = \downarrow\! v_1 $$
Now $k+2 \leq j \leq l+k+2$, so there are $(l+k+2)-(k+2)+1 = l+1$ such terms.\\

\noindent $(ii)\; m_j^{\prime}$ must take only the second $\downarrow\! v_1$.  We have:
$$(-1)^{|v_1|+k|w|-(k+1)} m_i^{\prime} \Big(\! \downarrow\! v_1 \otimes (\!\downarrow\!\! w)^{\otimes k} \otimes \!\Big[m_j^{\prime}(\downarrow\! v_1\otimes (\downarrow\! w)^{\otimes (j-1)}) \otimes\! (\!\downarrow\!\!w)^{\otimes l-(j-1)} \!\Big] \otimes \downarrow\! v_1 \otimes (\downarrow\! w)^{\otimes n-k-l-3}\Big) \!=\! -\!\downarrow\! v_1$$
Now $1\leq j \leq l+1$, so there are $(l+1)-1+1= l+1$ such terms.\\

\noindent $(iii)\; m_j^{\prime}$ must take the second and third $\downarrow\! v_1$'s.  We have:
$$(-1)^{|v_1|+k|w|-(k+1)} m_i^{\prime} \Big(\! \downarrow\! v_1 \otimes (\!\downarrow\! w)^{\otimes k} \otimes \Big[\!m_j^{\prime}(\downarrow\! v_1\otimes (\downarrow\! w)^{\otimes l} \otimes \downarrow\! v_1 \otimes (\downarrow\! w)^{\otimes j-l-2}) \Big] \otimes (\downarrow\! w)^{\otimes n-k-j+1}\Big) \!=\! -\downarrow\! v_1$$
Now $l+2\leq j \leq n-k-1$, so there are $(n-k-1)-(l+2)+1= n-k-l-2$ such terms.\\

\noindent $(iv)\; m_j^{\prime}$ must take only the third $\downarrow\! v_1$.  We have:
$$(-1)^{2|v_1|+(k+l)|w|-(k+l+2)} m_i^{\prime} \Big(\! \downarrow\! v_1 \otimes (\!\downarrow\! w)^{\otimes k} \otimes \downarrow\! v_1 \otimes (\downarrow\! w)^{\otimes l} \otimes \Big[\!m_j^{\prime}(\downarrow\! v_1 \otimes (\downarrow\! w)^{\otimes (j-1)}) \otimes (\downarrow\! w)^{\otimes n-k-l-j-2} \Big] \Big) \!=\! \downarrow\! v_1$$
Now $1\leq j \leq n-k-l-2$, so there are $(n-k-l-2)-1+1= n-k-l-2$ such terms.\\

$\Rightarrow \displaystyle\sum_{i+j=n+1} m_i^{\prime}m_j^{\prime} (\downarrow\! v_1 \otimes (\downarrow\! w)^{\otimes k} \otimes \downarrow\! v_1 \otimes (\downarrow\! w) ^{\otimes l} \otimes \downarrow\! v_1 \otimes (\downarrow\! w)^{\otimes n-k-l-3}) = (l+1) \downarrow\! v_1 -(l+1)\downarrow\! v_1 -(n-k-l-2)\downarrow\! v_1 + (n-k-l-2) \downarrow\! v_1 = 0$.\\

A similar argument holds for subcases 2, 3, and 4.  Hence, our result holds for case 1.\\

\noindent $\bullet$ \textbf{Case 2:} $\downarrow\! v_1$ is produced by $m_i^{\prime}(\downarrow\! v_1 \otimes (\downarrow\! w)^{\otimes (i-2)} \otimes \downarrow\! v_2)$.\\

We examine the 2 different possibilities for which $m_j^{\prime}$ can yield this arrangement:\\
\begin{center}
$(i)$ $m_j^{\prime}$ produces  the $\downarrow\! v_1$. \\
$(ii)$ $m_j^{\prime}$ produces a $\downarrow\! w$ in $(\downarrow\! w)^{\otimes (i-2)}$.\\
\end{center}

A similar observation to case 1 can be made here regarding the original arrangement $\downarrow\! x_1 \otimes \cdots \otimes \downarrow\! x_n$ containing \underline{exactly} 3  $v$'s, once again with $x_1 = v_1$.  In this case, $x_n=v_2$.  This yields 2 subcases:

$\circ$ \textit{Subcase 1}: We have $m_i^{\prime} m_j^{\prime}(\downarrow\! v_1 \otimes (\downarrow\! w)^{\otimes k} \otimes \downarrow\! v_1 \otimes (\downarrow\! w) ^{\otimes (n-k-3)} \otimes \downarrow\! v_2)$

$\circ$ \textit{Subcase 2}: We have $m_i^{\prime} m_j^{\prime}(\downarrow\! v_1 \otimes (\downarrow\! w)^{\otimes k} \otimes \downarrow\! v_2 \otimes (\downarrow\! w) ^{\otimes (n-k-3)} \otimes \downarrow\! v_2)$

Let us consider subcase 1:

\noindent $(i)\; m_j^{\prime}$ must take both $\downarrow\! v_1$'s. We have:
$$m_i^{\prime} \Big( \Big[ m_j^{\prime}(\downarrow\! v_1 \otimes (\downarrow\! w)^{\otimes k} \otimes \downarrow\! v_1 \otimes (\downarrow\! w)^{\otimes j-k-2}) \Big] \otimes (\downarrow\! w)^{\otimes n-j-1} \otimes \downarrow\! v_2 \Big) = \downarrow\! v_1$$
Now $k+2\leq j \leq n-1$, so there are $(n-1)-(k+2)+1= n-k-2$ such terms.\\

\noindent $(ii)\; m_j^{\prime}$ must take the second $\downarrow\! v_1$ only.  We have:
$$(-1)^{|v_1|+k|w|-(k+1)} m_{i}^{\prime} \Big( \downarrow\! v_1 \otimes (\downarrow\! w)^{\otimes k} \otimes \Big[m_{j}^{\prime}(\downarrow\! v_1 \otimes (\downarrow\! w)^{\otimes j-1}) \otimes (\downarrow\! w)^{\otimes n-k-j-2} \Big] \otimes \downarrow\! v_2 \Big) = -\downarrow\! v_1$$
Now $1\leq j \leq n-k-2$, so there are $(n-k-2)-(1)+1= n-k-2$ such terms.\\
\noindent This implies that

\noindent $\sum_{i+j=n+1} m_i^{\prime}m_j^{\prime} (\downarrow\! v_1 \otimes (\downarrow\! w)^{\otimes k} \otimes \downarrow\! v_1 \otimes (\downarrow\! w) ^{\otimes (n-k-3)} \otimes \downarrow\! v_2) = (n-k-2)\downarrow\! v_1 -  (n-k-2)\downarrow\! v_1 = 0$.\\

A similar argument may be made for subcase 2.  Hence, our result holds for case 2.\\

So $\displaystyle\sum_{i+j=n+1} m_i^{\prime}m_j^{\prime} (\downarrow\! x_1 \otimes \cdots  \otimes\downarrow\! x_{n})=0,\; \forall \; x_1 \cdots x_n \; \in \; V$.\\

Thus $D^2(\downarrow\! x_1 \otimes \cdots  \otimes\downarrow\! x_{n}) = 0$\\

By induction, $D^2 = 0$ on any number of inputs.\\

Hence the preceding maps $m_n$ defined on the graded vector space $V$ form an $A_{\infty}$ algebra structure.
\end{proof}

\section{Induced $L_{\infty}$ Algebra}
The $A_\infty$ algebra structure on $V=V_0\oplus V_1$ that was constructed in this note can be skew symmetrized  to yield an $L_\infty$ algebra structure on $V$; see Theorem 3.1 in \cite{LM} for details.  This $L_\infty$ algebra will thus join the collection of previously defined such structures on $V$.  The relationship among these algebras will be a topic for future research.
\bigskip

\end{document}